\newtheorem{theo}{Theorem}[section]
\newtheorem{propo}[theo]{Proposition}
\newtheorem{coro}[theo]{Corollary}
\newfont{\nset}{msbm10}
\newcommand{\ns}[1]{\mbox{\nset #1}}
\def\A{{\mbox {\boldmath $A$}}}
\def\B{{\mbox {\boldmath $B$}}}
\def\C{{\mbox {\boldmath $C$}}}
\def\D{{\mbox {\boldmath $D$}}}
\def\E{{\mbox {\boldmath $E$}}}
\def\G{\Gamma}
\def\I{{\mbox {\boldmath $I$}}}
\def\J{{\mbox {\boldmath $J$}}}
\def\M{{\mbox {\boldmath $M$}}}
\def\O{{\mbox {\boldmath $O$}}}
\def\U{\mbox{\boldmath $U$}}
\def\R{\ns{R}}
\def\A{{\mbox {\boldmath $A$}}}
\def\matrix0{{\mbox {\boldmath $O$}}}
\def\j{{\mbox{\boldmath $j$}}}
\def\u{{\mbox{\boldmath $u$}}}
\def\v{{\mbox{\boldmath $v$}}}
\def\w{{\mbox{\boldmath $w$}}}
\def\vec0{\mbox{\bf 0}}
\def\dist{\mathop{\rm dist }\nolimits}
\def\exc{\mathop{\rm exc }\nolimits}
\def\tr{\mathop{\rm tr }\nolimits}
\def\sp{\mathop{\rm sp }\nolimits}
\def\span{\mathop{\rm span }\nolimits}
\begin{document}

\title{Algebraic Characterizations of Regularity \\ Properties in Bipartite Graphs
}

\author{A. Abiad, C. Dalf\'{o} and  M.A. Fiol
\\ \\
{\small Dept. de Matem\`atica Aplicada IV} \\
{\small Universitat Polit\`ecnica de Catalunya} \\
{\small Barcelona, Catalonia} \\
{\small (e-mails: {\tt \{aida.abiad,cdalfo,fiol\}@ma4.upc.edu})}\\
\\
\\
Yahya Ould Hamidoune, in memoriam
}


\maketitle

\noindent {\em Keywords:} Bipartite graph, regular graph, distance-regular graph,
eigenvalues, predistance polynomials.

\noindent {\em 2010 Mathematics Subject Classification:} 05E30, 05C50.

\begin{abstract}
Regular and distance-regular characterizations of general graphs are well-known. In particular, the spectral excess theorem states that a connected graph $\G$ is distance-regular if and only if its spectral excess (a number that can be computed from the spectrum) equals the average excess (the mean of the numbers of vertices at extremal distance from every vertex).
The aim of this paper is to derive new characterizations of regularity and distance-regularity for the more restricted, but interesting, family of bipartite graphs.
In this case, some characterizations of (bi)regular bipartite graphs are given in terms of the mean degrees in every partite set and the Hoffman polynomial. Moreover, it is shown that the conditions for having distance-regularity in such graphs can be relaxed when compared with general graphs. Finally,  a new version of the spectral excess theorem for bipartite graphs is presented.
\end{abstract}

\section{Introduction}
Bipartite graphs are combinatorial objects bearing some interesting symmetries. Thus, their spectra---eigenvalues of their adjacency matrices---are symmetric about zero, as the corresponding eigenvectors come into pairs. Moreover, vertices in the same (respectively, different) independent set are always at even (respectively, odd) distance. Both characteristics have well-known consequences in most properties and parameters of such graphs. Roughly speaking, we could say that the conditions for a given property to be satisfied in a general graph can be somehow relaxed to guarantee the same property for a  bipartite graph.
The goal of this paper is to derive some new results supporting this claim in the framework of regular and distance-regular graphs, for which
several characterizations of combinatorial or algebraic nature are known. Thus, the presented characterizations
of bipartite distance-regular graphs involve such parameters as 
the numbers of walks between vertices (entries of the powers of the adjacency matrix $\A$), the crossed local multiplicities (entries of the idempotents $\E_i$ or eigenprojectors), the predistance polynomials, etc.
For instance, it is known that a regular graph $\G$, with eigenvalues $\theta_0>\theta_1>\cdots > \theta_d$ and diameter $D=d$, is distance-regular if and only if its idempotents $\E_1$ and $\E_d$ belong to the vector space ${\cal D}$ spanned by its distance matrices $\I, \A, \A_2,\ldots \A_d$. In contrast with this,  the same result holds in the case of bipartite graphs, but now only $\E_1\in {\cal D}$ need to be required.
Also, we derive a new version of the spectral excess theorem which gives a quasi-spectral characterization of distance-regularity for (regular) bipartite graphs.

We assume that the reader is familiar with the basic concepts on algebraic graph theory and, in particular, on distance-regular graphs. See, for instance, Cvetkovi\'c, Doob, and Sachs \cite{cds82}, Biggs \cite{biggs}, Brouwer, Cohen, and Neumaier \cite{bcn}, Brouwer and Haemers \cite{bh12},
Van Dam, Koolen, and Tanaka \cite{dkt12}, Fiol \cite{f02}, and Godsil \cite{g93}.

\section{Preliminaries}


Let $\G=(V,A)$ be a (simple and connected) graph on $n=|V|$ vertices, with adjacency matrix $\A$, eigenvalues $\lambda_1\ge \lambda_2\ge \cdots \ge \lambda_n$,  and spectrum
\begin{equation}\label{spG}
\sp \G = \sp \A = \{\theta_0^{m_0},\theta_1^{m_1},\dots,
\theta_d^{m_d}\},
\end{equation}
where the different eigenvalues of $\G$ are in decreasing order,
$\theta_0>\theta_1>\cdots >\theta_d$, and the superscripts
stand for their multiplicities $m_i=m(\theta_i)$. In
particular, note that when $\G$ is $\delta$-regular, the largest eigenvalue is $\theta_0=\lambda_1=\delta$ and has multiplicity $m_0=1$ (as $\G$ is
connected).

Recall also that $\G$ is bipartite if and only if it does not contain odd cycles. Then, its adjacency matrix is of the form
$$
\A=\left(
\begin{array}{cc}
\O & \B \\
\B^\top & \O
\end{array}
 \right).
$$
(Here and henceforth it is assumed that the block matrices have the appropriate dimensions.)
Moreover, for any polynomial $p\in \R_d[x]$ with even and odd parts $p_0$ and $p_1$, we have
\begin{equation}
\label{p(A)}
p(\A) =  p_0(\A)+p_1(\A)
  =
\left(
\begin{array}{cc}
\C & \O \\
\O & \D
\end{array}
 \right) +\left(
\begin{array}{cc}
\O & \M  \\
\M^\top  & \O
\end{array}
 \right).
\end{equation}
Also, the spectrum of $\G$ is symmetric about zero: $\theta_i=-\theta_{d-i}$ and
$m_i=m_{d-i}$, $i=0,1,\ldots,d$. (In fact, a well-known result states that a connected graph $\G$ is bipartite if and only if $\theta_0=-\theta_d$; see, for instance, Cvetkovi\'c, Doob and Sachs \cite{cds82}.) This is due to the fact that, if $(\u|\v)^{\top}$ is a (right) eigenvector with eigenvalue $\theta_i$, then $(\u|-\v)^{\top}$ is an eigenvector for the eigenvalue $-\theta_i$. As shown below, a similar symmetry also applies to the entries of the {\it $($principal\/$)$ idempotents} $\E_i$ representing the projections onto the eigenspaces ${\cal E}_i=\ker (\A-\theta_i\I)$, $i=0,1,\ldots,d$. To see this, first recall that, for any graph with eigenvalue $\theta_i$ having multiplicity $m_i$, its corresponding idempotent can be computed as $\E_i=\U_i\U_i^\top$, where $\U_i$ is the $n\times m_i$ matrix whose columns form an orthonormal basis of ${\cal E}_i$.
For instance, when $\G$ is a $\delta$-regular graph on $n$ vertices
its largest eigenvalue $\theta_0=\delta$ has eigenvector $\j$,
the all-$1$ (column) vector, and corresponding idempotent
$\E_0=\frac{1}{n}\j\j^{\top}=\frac 1n \J$, where $\J$ is the all-$1$ matrix.
Alternatively, we can also compute the idempotents as $\E_i=L_i(\A)$
where $L_i$ is the Lagrange interpolating polynomial of degree $d$
satisfying $L_i(\theta_i)=1$ and $L_i(\theta_j)=0$ for $j\neq i$.
The entries of the idempotents $m_{uv}(\theta_i)=(\E_i)_{uv}$ are called the {\it crossed $uv$-local multiplicities} and, by taking $p=x^{\ell}$, $\ell\ge 0$, they allow us to compute the number of $\ell$-walks between any two vertices:
$$
\label{j-walks}
a_{uv}^{(\ell)}=(\A^\ell)_{uv}=\sum_{i=0}^d m_{uv}(\theta_i)\theta_i^{\ell},
$$
(see Fiol, Garriga and Yebra \cite{fgy99}, Dalf\'o, Fiol and Garriga \cite{dfg09} and Dalf\'o, Van Dam, Fiol, Garriga and Gorissen \cite{ddfgg10}).
In particular, when $u=v$,  $m_u(\theta_i)=m_{uu}(\theta_i)$ are the so-called  {\it local multiplicities} of vertex $u$,
satisfying $\sum_{i=0}^d m_u(\theta_i) = 1$ for $u\in V$, and $\sum_{u\in
V} m_u(\theta_i) =m_i$ for $i=0,1,\ldots,d$ (see Fiol and Garriga \cite{fg97}).

Let $\dist(u,v)$ denote the distance between vertices $u,v$. From any of the above expressions for $\E_i$, we infer that, when $\G$ is bipartite,  the crossed local multiplicities satisfy:
\begin{itemize}
\item
$m_{uv}(\theta_i)=m_{uv}(\theta_{d-i})$,\ \ \ \quad  $i=0,1,\ldots,d$,\quad if $\dist(u,v)$ is even.
\item
$m_{uv}(\theta_i)=-m_{uv}(\theta_{d-i})$, \quad $i=0,1,\ldots,d$,\quad if $\dist(u,v)$ is odd.
\end{itemize}
In particular, as $\dist(u,u)=0$, the local multiplicities bear the same symmetry as the standard multiplicities: $m_u(\theta_i)=m_u(\theta_{d-i})$ for any vertex $u\in V$ and eigenvalue $\theta_i$, $i=0,1,\ldots,d$.

From the above results, notice that, when $\G$ is regular and bipartite, we have $\E_0=\frac 1n \J$ (as mentioned before) and
\begin{equation}
\label{Ed-bip}
 \E_d  =  \frac 1n \left(
\begin{array}{rr}
\J & -\J \\
-\J & \J
\end{array}
 \right).
\end{equation}

As proved by Haemers \cite{h95}, the following result is a direct consequence of the {\em Interlacing Theorem}.

\begin{theo}[\cite{h95}]
\label{theo:tesiHaemers1.2.3}
Let $\A$ be a symmetric matrix  partitioned into $m^2$ blocks
$\A_{ij}$, \linebreak $i,j=1,2,\ldots,m$, with $m< n$, such that $\A_{ii}$ is a square matrix for any $i=1,2,\ldots,m$. Let
$\B=(b_{ij})$ be  the $m\times m$ matrix with $b_{ij}$ being the average row sum of $\A_{ij}$, for
$i,j=1,2,\ldots,m$. Let  $\lambda_{1}\geq
\lambda_{2}\geq \cdots \geq \lambda_{n}$ and $\mu_{1}\geq \mu_{2}\geq \cdots \geq \mu_{m}$ be the eigenvalues of $\A$ and $\B$, respectively. Then,
\begin{itemize}
\item[$(a)$]
The eigenvalues of $\B$ interlace the eigenvalues of $\A$. That is,
$$
\lambda_{n-m+i}\le \mu_i\le \lambda_i, \quad i=1,2,\ldots,m.
$$
\item[$(b)$]
If the interlacing is tight, that is, for some $0\le k\le m$, $\mu_i=\lambda_i$ for $i=1,2,\ldots,k$ and $\mu_i=\lambda_{n-m+i}$ for $i=k+1,k+2,\ldots,m$, then $\A_{ij}$ has constant row
and column sums for $i,j=1,2,\ldots,m$.
\end{itemize}
\end{theo}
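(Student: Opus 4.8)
The plan is to deduce the whole statement from the interlacing inequality for the compression of a symmetric matrix onto a lower‑dimensional subspace, reading off $(b)$ from the equality case. First I would record the standard translation of a block partition into such a compression. Let $n_i$ be the order of the $i$th block and let $\S$ be the $n\times m$ characteristic matrix of the partition (a $1$ in entry $(v,i)$ exactly when index $v$ lies in the $i$th block), so that $\S^\top\S=\D:=\diag(n_1,\dots,n_m)$ and the quotient matrix of average row sums is $\B=\D^{-1}\S^\top\A\S$. Passing to $\widetilde\S:=\S\D^{-1/2}$, which has orthonormal columns, we obtain the symmetric matrix $\widetilde\B:=\widetilde\S^\top\A\widetilde\S=\D^{1/2}\B\D^{-1/2}$, similar to $\B$ and hence with the same eigenvalues $\mu_1\ge\cdots\ge\mu_m$; note also that the identities $\A\widetilde\S=\widetilde\S\widetilde\B$ and $\A\S=\S\B$ are equivalent. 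Part $(a)$ is then immediate from Courant--Fischer: since $y\mapsto\widetilde\S y$ is a linear isometry of $\R^m$ onto $\mathrm{Im}(\widetilde\S)$, one has $\mu_i=\max_{\dim U=i}\min_{0\ne y\in U}(\widetilde\S y)^\top\A(\widetilde\S y)/\|\widetilde\S y\|^2\le\lambda_i$, because the subspaces $\widetilde\S U$ form a subfamily of the $i$-dimensional subspaces of $\R^n$; applying this to $-\A$ (whose compression onto $\mathrm{Im}(\widetilde\S)$ is $-\widetilde\B$) gives $\mu_i\ge\lambda_{n-m+i}$.

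For part $(b)$, take an orthonormal system $\v_1,\dots,\v_m$ of eigenvectors of $\widetilde\B$ and set $\u_i:=\widetilde\S\v_i$, so that the $\u_i$ are orthonormal in $\R^n$ with $\u_i^\top\A\u_i=\mu_i$. The point is to show that, when the interlacing is tight, each $\u_i$ is actually a $\mu_i$-eigenvector of $\A$. For $i\le k$ I would induct on $i$: assuming $\u_1,\dots,\u_{i-1}$ are eigenvectors of $\A$ for $\lambda_1,\dots,\lambda_{i-1}$, the subspace $W=\span(\u_1,\dots,\u_{i-1})$ is $\A$-invariant and accounts for exactly one eigenvector of each of the top $i-1$ eigenvalues, so the largest eigenvalue of the symmetric restriction $\A|_{W^\perp}$ is $\lambda_i$; since $\u_i\in W^\perp$ and $\u_i^\top\A\u_i=\mu_i=\lambda_i$ already realizes this maximal Rayleigh quotient, $\u_i$ must be a $\lambda_i$-eigenvector of $\A|_{W^\perp}$, hence of $\A$. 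The cases $k<i\le m$ follow by applying the same argument to $-\A$. Thus $\u_1,\dots,\u_m$ is an orthonormal eigenbasis of $\mathrm{Im}(\widetilde\S)$ for $\A$, so $\mathrm{Im}(\widetilde\S)$ is $\A$-invariant, whence $\A\widetilde\S=\widetilde\S\widetilde\B$ and therefore $\A\S=\S\B$. Finally, reading off the entry in row $v$ and column $i$ of $\A\S=\S\B$, with $v$ in the $j$th block, says precisely that every row of the block $\A_{ji}$ sums to $b_{ji}$; transposing the identity and using $\A=\A^\top$ gives constant column sums as well, which is $(b)$.

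The only genuinely delicate step is the equality analysis in $(b)$: tightness has to be used to upgrade ``$\u_i$ optimizes the Rayleigh quotient on $\mathrm{Im}(\widetilde\S)$'' to ``$\u_i$ is an eigenvector of $\A$'', and the obstruction is repeated eigenvalues — over a subspace on which several $\lambda_j$ coincide, an optimal vector need not be an eigenvector. This is exactly why the induction must carry the stronger hypothesis that the previously produced $\u_j$ are actual eigenvectors and must invoke the $\A$-invariance of $W^\perp$ at each step. Everything else — the normalization, the min--max inequality for $(a)$, and the passage from $\A\S=\S\B$ back to the blocks — is routine.
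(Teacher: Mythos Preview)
Your argument is correct and is essentially the standard proof due to Haemers: realize $\B$ (up to similarity) as the compression $\widetilde\S^\top\A\widetilde\S$ onto the column space of the normalized characteristic matrix, deduce $(a)$ from Courant--Fischer, and in the tight case upgrade the Rayleigh optimizers to genuine eigenvectors by induction to obtain $\A\S=\S\B$. The paper itself does not supply a proof of this theorem at all---it is quoted as a known result from \cite{h95}---so there is nothing further to compare; your write-up simply reproduces the argument the paper is citing.
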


\section{Regular graphs}

We begin our study with a simple result characterizing biregularity in a bipartite graph.

\subsection{Spectrum and regularity}
For a given graph $\G=(V,E)$, let $\delta_u$ denote the degree of vertex $u\in V$. Then,
the {\em average degree} of $\G$  is defined by
\begin{equation}\label{average-degree}
\overline{\delta}=\frac{1}{n}\displaystyle \sum_{u\in
V}\delta_{u}=\frac{1}{n}\tr \A^{2}=\frac{1}{n}\displaystyle
\sum_{i=1}^{n}\lambda_{i}^{2}.
\end{equation}
Thus, if in Theorem \ref{theo:tesiHaemers1.2.3} we consider  the trivial partition with $m=1$, the {\em average quotient matrix} $\B$ (whose entries are the average row sums of the blocks of $\A$) has eigenvalue $\mu_{1}=\overline{\delta}$ and, hence,
\begin{equation}
\label{ineq}
\overline{\delta}\leq \lambda_{1},
\end{equation}
with equality if and only if $\G$ is $\overline{\delta}$-regular. (Combining this with (\ref{average-degree}), we see that regularity is a property that can be deduced from the spectrum.)

Let us show that there is an analogous result for bipartite graphs. A
bipartite graph $\G=(V_{1}\cup V_{2},E)$ is called
$(\delta_{1},\delta_{2})$-\emph{biregular}
when the $n_{1}$ vertices of $V_{1}$ have degree
$\delta_{1}$, and the $n_{2}$ vertices of $V_{2}$ have
degree $\delta_{2}$. Thus, by counting in two different
ways the number of edges $m=|E|$, we have that
$n_{1}\delta_{1}=n_{2}\delta_{2}$. Also, it is well known that, for such a graph, $\theta_0=\sqrt{\delta_1\delta_2}$ (see, for instance, Godsil and Royle \cite[pp. 172--173]{gr01}). For a general bipartite graph with independent sets $V_1,V_2$, define $\overline{\delta}_{1}$ and $\overline{\delta}_{2}$ as the average degree of the vertices in $V_{1}$ and $V_{2}$, that is,
$\overline{\delta}_{1}=\frac{1}{n_{1}}\sum_{u\in
V_{1}}\delta_{u}$  and $\overline{\delta}_{2}=\frac{1}{n_{2}}\sum_{u\in
V_{2}}\delta_{u}$.

\begin{propo}
\label{propo:Aida1}
Let $\G=(V_{1}\cup V_{2},E)$ be a bipartite graph with $n=n_{1}+n_{2}$ vertices, average degrees $\overline{\delta}_{1}$ and $\overline{\delta}_{2}$ and maximum eigenvalue $\lambda_1$. Then,
\begin{equation}\label{ineq-bipartite}
\sqrt{\overline{\delta}_{1}\overline{\delta}_{2}}\le \lambda_{1}
\end{equation}
and equality holds if and only if $\G$ is $(\overline{\delta}_{1},\overline{\delta}_{2})$-biregular.
\end{propo}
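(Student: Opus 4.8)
The plan is to apply the Interlacing Theorem (Theorem~\ref{theo:tesiHaemers1.2.3}) to the partition of $V$ into the two independent sets $V_1$ and $V_2$. Under this partition the adjacency matrix has the block form $\A=\left(\begin{array}{cc} \O & \B \\ \B^\top & \O\end{array}\right)$ with $\B$ the $n_1\times n_2$ biadjacency matrix; the diagonal blocks are zero, the average row sum of $\B$ (whose rows are indexed by $V_1$) is $\frac{1}{n_1}\sum_{u\in V_1}\delta_u=\overline{\delta}_1$, and the average row sum of $\B^\top$ is $\overline{\delta}_2$. Hence the $2\times 2$ quotient matrix of Theorem~\ref{theo:tesiHaemers1.2.3} is $\Q=\left(\begin{array}{cc} 0 & \overline{\delta}_1 \\ \overline{\delta}_2 & 0\end{array}\right)$, with eigenvalues $\mu_1=\sqrt{\overline{\delta}_1\overline{\delta}_2}$ and $\mu_2=-\sqrt{\overline{\delta}_1\overline{\delta}_2}$. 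Part $(a)$ of Theorem~\ref{theo:tesiHaemers1.2.3} gives $\mu_1\le\lambda_1$, which is precisely inequality~(\ref{ineq-bipartite}). (An alternative self-contained proof of the inequality is to evaluate the Rayleigh quotient $\x^\top\A\x/\x^\top\x$ at a vector $\x$ that is constant on each $V_i$ and optimize the two constants via the arithmetic--geometric mean inequality; this yields $\lambda_1\ge |E|/\sqrt{n_1 n_2}=\sqrt{\overline{\delta}_1\overline{\delta}_2}$ because $|E|=n_1\overline{\delta}_1=n_2\overline{\delta}_2$.)

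For the equality statement, first assume $\sqrt{\overline{\delta}_1\overline{\delta}_2}=\lambda_1$, i.e.\ $\mu_1=\lambda_1$. Here bipartiteness does the crucial work: since $\G$ is bipartite and connected, its spectrum is symmetric about $0$ (Section~2), so $\lambda_n=\theta_d=-\theta_0=-\lambda_1$, and therefore $\mu_2=-\sqrt{\overline{\delta}_1\overline{\delta}_2}=-\lambda_1=\lambda_n$. Thus the interlacing is tight (in the notation of Theorem~\ref{theo:tesiHaemers1.2.3}$(b)$ this is the case $k=1$: $\mu_1=\lambda_1$ and $\mu_2=\lambda_{n-2+2}=\lambda_n$). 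By part $(b)$, every block of the partitioned matrix $\A$ has constant row and column sums; in particular $\B$ has constant row sums, which forces $\delta_u=\overline{\delta}_1$ for all $u\in V_1$, and $\B^\top$ has constant row sums, which forces $\delta_u=\overline{\delta}_2$ for all $u\in V_2$. Hence $\G$ is $(\overline{\delta}_1,\overline{\delta}_2)$-biregular.

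Conversely, if $\G$ is $(\overline{\delta}_1,\overline{\delta}_2)$-biregular then, as recalled above, $\theta_0=\sqrt{\overline{\delta}_1\overline{\delta}_2}$, so $\lambda_1=\theta_0=\sqrt{\overline{\delta}_1\overline{\delta}_2}$ and equality holds in~(\ref{ineq-bipartite}). (If one prefers to avoid citing this fact, note that the positive vector $\w=(\sqrt{\overline{\delta}_1}\,\j\ |\ \sqrt{\overline{\delta}_2}\,\j)^\top$ satisfies $\A\w=\sqrt{\overline{\delta}_1\overline{\delta}_2}\,\w$ by a one-line computation and is therefore the Perron eigenvector of the connected graph $\G$, whence $\lambda_1=\sqrt{\overline{\delta}_1\overline{\delta}_2}$.)

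The whole argument is short, and the only step requiring real attention is the equality case: one must observe that $\mu_1=\lambda_1$ by itself already yields \emph{tight} interlacing, and this is exactly the point where bipartiteness is essential, since it pins down $\mu_2=\lambda_n$ automatically. One should also keep in mind that Theorem~\ref{theo:tesiHaemers1.2.3} requires the number of parts to be strictly less than $n$ (here it is $2$), so the degenerate graph $K_2$ ($n=2$) is to be dispatched directly: it is $(1,1)$-biregular with $\lambda_1=1=\sqrt{1\cdot 1}$, in agreement with the statement.
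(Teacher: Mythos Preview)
Your proof is correct and follows essentially the same approach as the paper: form the $2\times 2$ quotient matrix for the partition $V_1\cup V_2$, apply interlacing to get the inequality, and in the equality case use bipartiteness to force $\mu_2=-\mu_1=-\lambda_1=\lambda_n$, whence the interlacing is tight and Theorem~\ref{theo:tesiHaemers1.2.3}$(b)$ gives biregularity. The only differences are that you spell out the converse direction and the $K_2$ edge case explicitly and offer the Rayleigh-quotient alternative, all of which the paper leaves implicit or treats elsewhere.
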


\begin{proof}
Let
$$
\A=\left( \begin{array}{cc}
\vec0 & \A_{1,2} \\
\A_{2,1} & \vec0
 \end{array} \right)
 $$
be the adjacency matrix of $\G$ and  consider the natural average quotient matrix with $m=2$
$$
\B=\left( \begin{array}{cc}
0 & \overline{\delta}_{1} \\
\overline{\delta}_{2} & 0
 \end{array} \right).
 $$
Then, since its eigenvalues are $\pm \sqrt{\overline{\delta}_{1}\overline{\delta}_{2}}$, Theorem \ref{theo:tesiHaemers1.2.3}$(a)$ gives
$$
\mu_{1}=\sqrt{\overline{\delta}_{1}\overline{\delta}_{2}}\le \lambda_1.
$$
Moreover, in case of equality, $\mu_2=-\mu_1=-\lambda_1=\lambda_n$, so that the interlacing is tight
and Theorem
\ref{theo:tesiHaemers1.2.3}$(b)$ implies the biregularity of $\G$.
\end{proof}

In fact, we can derive (\ref{ineq}) and (\ref{ineq-bipartite}) and other similar results from the well-known result from linear algebra known as the {\em Rayleigh's principle} or {\em Rayleigh's inequalities} (see, for instance, Godsil and Royle \cite[p. 202]{gr01}).
This result states that, if
$\u_{1},\u_{2},\ldots,\u_{n}$ are the eigenvectors corresponding
to $\lambda_{1}\geq \lambda_{2}\geq \cdots \geq \lambda_{n}$
respectively, and for some $1\leq i\leq j\leq n$ we have $\u \in
\span \{\u_{i},\u_{i+1},\ldots,\u_{j}\}$, then
$$
\label{Rayleigh}
\lambda_{j}\leq \frac{\langle \u,\A\u\rangle}{||\u||^{2}}\leq
\lambda_{i}.
$$
Moreover, equality on the left (respectively, right) implies that $\u$ is a $\lambda_{j}$-eigenvector (respectively, $\lambda_{i}$-eigenvector) of $\A$.
Then, the clue is to make the ``right choice'' of $\u$. For instance, let us
consider the derivation of the two previous results:

\begin{enumerate}
\item
If $\u=\j$, we have $\langle \j,\A\j \rangle=
\displaystyle \sum_{u\in V}\delta_{u}$, $||\j||^{2}=n$, and we get
(\ref{ineq}).

\item
Assume that $\G$ is bipartite with stable sets $V_{1}$, $V_{2}$,
numbers of vertices $n_{1}=|V_{1}|$, $n_{2}=|V_{2}|$, and average
degrees $\overline{\delta}_{1}$, $\overline{\delta}_{2}$.
Notice that
$n_{1}\overline{\delta}_{1}+n_{2}\overline{\delta}_{2}=2m$. Then,
if
$\u=(\sqrt{\overline{\delta}_{1}}\j|\sqrt{\overline{\delta}_{2}}\j)$
with $||\u||^{2}=2m$, we get

$$
\frac{\langle \u,\A\u \rangle}{||\u||^{2}}
=\sqrt{\overline{\delta}_{1}\overline{\delta}_{2}}\leq
\lambda_{1}
$$

and, from the above comments, equality is attained when $\G$
is biregular.

\end{enumerate}

\subsection{Polynomials and regularity}

The {\em predistance polynomials} $p_0,p_1,\ldots,p_d$, with degree $\deg p_i=i$, $i=0,1,\ldots,d$, associated to a given graph $\G$ with spectrum $\sp \G$ as in (\ref{spG}), are a sequence of orthogonal polynomials with respect to the scalar product
$$
\langle f,g\rangle_\G = \frac 1n \tr(f(\A)g(\A))= \frac 1n \sum_{i=0}^d m_i f(\theta_i)g(\theta_i),
$$
normalized in such a way that $\|p_i\|_{\G}^2=p_i(\theta_0)$ (this makes sense since, as it is well-known, we always have $p_i(\theta_0)>0$;
for a simple explanation see, for instance, Van Dam \cite{vd08}).
As every sequence of orthogonal polynomials, the predistance polynomials satisfy a three-term recurrence of the form
\begin{equation}\label{recur-pol}
xp_i=\beta_{i-1}p_{i-1}+\alpha_i p_i+\gamma_{i+1} p_{i+1},\qquad i=0,1,\ldots,d,
\end{equation}
where $\beta_{-1}=\gamma_{d+1}=0$ and the other constants $\beta_{i},\gamma_{i}$ are nonzero, initialized with $p_0=1$ and $p_1=x$.
Moreover, if $\G$ is bipartite,  the symmetry of such a scalar product yields that $\alpha_i=0$ for every $i=1,2,\ldots,d$, and $p_i$ is even (respectively, odd) for even (respectively, odd) degree  $i$.
In terms of the predistance polynomials, the {\em Hoffman polynomial} is  $H=p_0+p_1+\cdots+p_d$, and satisfies $H(\theta_0)=n$ (the order of the graph) and $H(\theta_i)=0$ for $i=1,2,\ldots,d$ (for more details about the predistance polynomials, see
C\'amara, F\`abrega, Fiol and Garriga \cite{cffg09}).
In \cite{hof63}, Hoffman proved that a (connected) graph $\G$ is regular if and only if $H(\A)=\J$.
(In fact, $H$ is the unique polynomial of degree at most $d$ satisfying this property.)
Furthermore, when $\G$ is regular and bipartite, the even and odd parts of $H$,  $H_0=\sum_{i\ {\rm even}} p_i$ and $H_1=\sum_{i\ {\rm odd}} p_i$, satisfy, by (\ref{p(A)}):
\begin{equation}
\label{H0(A)-H1(A)}
H_0(\A)  =  \left(
\begin{array}{cc}
\J & \O \\
\O & \J
\end{array}
 \right)\quad \mbox{ and }\quad
H_1(\A)  =
 \left(
\begin{array}{cc}
\O & \J \\
\J & \O
\end{array}
 \right).
\end{equation}

As far as we know, the following theorem is new and can be seen as the biregular counterpart of Hoffman's result.
\begin{theo}
 A bipartite graph $\G=(V_1\cup V_2, E)$  with $n=|V_1|+|V_2|=n_1+n_2$ vertices is $(\delta_1,\delta_2)$-biregular if and only if the odd part of its Hoffman polynomial satisfies
\begin{equation}\label{H1(A)}
H_1(\A)=\alpha\left(
\begin{array}{cc}
\O & \J \\
\J & \O
\end{array}
 \right)
\end{equation}
with 
$\displaystyle \alpha=\frac{n_1+n_2}{2\sqrt{n_1 n_2}}=\frac{\delta_1+\delta_2}{2\sqrt{\delta_1 \delta_2}}$.

\end{theo}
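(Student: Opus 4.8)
The plan is to compute $H_1(\A)$ in closed form for an arbitrary connected bipartite graph (with both stable sets nonempty) and then to read off exactly when it takes the prescribed shape. The first step is purely spectral: since $H$ is the preHoffman polynomial, so $H(\theta_0)=n$ and $H(\theta_i)=0$ for $i=1,\ldots,d$, and since $\theta_i=-\theta_{d-i}$, the odd part $H_1(x)=\frac{1}{2}(H(x)-H(-x))$ satisfies $H_1(\theta_0)=\frac{1}{2}(H(\theta_0)-H(\theta_d))=\frac{n}{2}$, $H_1(\theta_d)=-\frac{n}{2}$, and $H_1(\theta_i)=\frac{1}{2}(H(\theta_i)-H(\theta_{d-i}))=0$ for $1\le i\le d-1$. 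Hence, expanding in the idempotent basis, $H_1(\A)=\frac{n}{2}(\E_0-\E_d)$.

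The second step identifies $\E_0-\E_d$. Let $\v=(\v_1|\v_2)^\top$ be the positive unit eigenvector of $\A$ for $\theta_0$ (it is unique, since $\G$ is connected and so $m_0=1$), so that $\E_0=\v\v^\top$; by the eigenvector pairing recalled in the Preliminaries, $(\v_1|-\v_2)^\top$ is a unit eigenvector for $\theta_d=-\theta_0$, whence $\E_d=(\v_1|-\v_2)(\v_1|-\v_2)^\top$. Subtracting these block matrices the diagonal blocks cancel and
$$
H_1(\A)=\frac{n}{2}(\E_0-\E_d)=n\left(\begin{array}{cc}\O & \v_1\v_2^\top\\ \v_2\v_1^\top & \O\end{array}\right).
$$
This identity holds for every connected bipartite graph and is the heart of the argument; what remains is only to decide when $\v_1,\v_2$ are constant and to evaluate the resulting constant.

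For the forward implication, I would use that if $\G$ is $(\delta_1,\delta_2)$-biregular then $\A_{1,2}\j=\delta_1\j$, $\A_{2,1}\j=\delta_2\j$ and $\theta_0=\sqrt{\delta_1\delta_2}$; a direct check then shows $\v=(a\j|b\j)^\top$ for suitable $a,b>0$ with $a^2/b^2=\delta_1/\delta_2$ and $n_1a^2+n_2b^2=1$. Since $\delta_1/\delta_2=n_2/n_1$ by the edge count $n_1\delta_1=n_2\delta_2$, these give $n_1a^2=n_2b^2=\frac{1}{2}$, hence $ab=\frac{1}{2\sqrt{n_1n_2}}$, and comparing with the displayed formula yields $\alpha=nab=\frac{n_1+n_2}{2\sqrt{n_1n_2}}$; using $\sqrt{\delta_1\delta_2}=\theta_0$ and $n_1\delta_1=n_2\delta_2$ once more rewrites this as $\frac{\delta_1+\delta_2}{2\sqrt{\delta_1\delta_2}}$, which is (\ref{H1(A)}).

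Conversely, assume (\ref{H1(A)}). Then $n\,\v_1\v_2^\top=\alpha\J$, and since all entries of $\v_1$ and of $\v_2$ are strictly positive, fixing one coordinate of $\v_2$ forces $\v_1$ to be constant and then $\v_2$ to be constant, say $\v_1=a\j$, $\v_2=b\j$. Substituting into $\A\v=\theta_0\v$ gives $\A_{1,2}\j=\frac{\theta_0 a}{b}\j$ and $\A_{2,1}\j=\frac{\theta_0 b}{a}\j$, so every vertex of $V_1$ has degree $\delta_1:=\theta_0 a/b$ and every vertex of $V_2$ has degree $\delta_2:=\theta_0 b/a$; that is, $\G$ is $(\delta_1,\delta_2)$-biregular, and the value of $\alpha$ is then forced exactly as in the forward direction. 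I do not foresee a genuine obstacle: the only delicate points are keeping the normalization of the Perron eigenvector straight and checking that the two displayed expressions for $\alpha$ coincide, which is precisely where the relation $n_1\delta_1=n_2\delta_2$ enters.
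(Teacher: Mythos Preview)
Your proof is correct. The forward implication is essentially identical to the paper's: both establish $H_1(\A)=\frac{n}{2}(\E_0-\E_d)$ from the values $H_1(\theta_0)=\frac{n}{2}$, $H_1(\theta_d)=-\frac{n}{2}$, $H_1(\theta_i)=0$ otherwise, and then compute $\E_0,\E_d$ via the explicit Perron eigenvector of a biregular graph.

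Where you diverge from the paper is in the converse. You first derive the identity
\[
H_1(\A)=n\left(\begin{array}{cc}\O & \v_1\v_2^\top\\ \v_2\v_1^\top & \O\end{array}\right)
\]
valid for \emph{every} connected bipartite graph, and then read off from the hypothesis that the rank-one matrix $\v_1\v_2^\top$ is a multiple of $\J$; Perron positivity forces $\v_1,\v_2$ to be constant, and the eigenvalue equation then yields biregularity. The paper instead argues directly from the hypothesis via the commutation $\A H_1(\A)=H_1(\A)\A$, which in block form gives $\B\J=\J\B^\top$ and $\B^\top\J=\J\B$, i.e.\ constant row and column sums in each block. Your route is a bit more unified (one closed-form formula serves both directions) and makes explicit what $H_1(\A)$ looks like in the non-biregular case; the paper's commutation trick is perhaps more elementary in that it avoids invoking strict positivity of the Perron eigenvector.
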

\label{Hof-bip(1)}
\begin{proof}
Assume first that $\G$ is biregular with degrees, say, $\delta_1$ and $\delta_2$. Then, \linebreak $\theta_0=-\theta_d=\sqrt{\delta_1\delta_2}$ with respective (column) eigenvectors $\u=(\sqrt{\delta_1}\j|\sqrt{\delta_2}\j)^{\top}$ and $\v=(\sqrt{\delta_1}\j|-\sqrt{\delta_2}\j)^{\top}$, with the $\j$'s being all-$1$ (row) vectors with the appropriate lengths. Therefore, the respective idempotents are
\begin{eqnarray*}
\E_0 & = & \frac{1}{\|\u\|^2}\u\u^{\top}=\frac{1}{n_1\delta_1+n_2\delta_2} \left(
\begin{array}{cc}
\delta_1\J & \sqrt{\delta_1\delta_2}\J \\
\sqrt{\delta_1\delta_2}\J & \delta_2\J
\end{array}
 \right), \\
 \E_d & = & \frac{1}{\|\v\|^2}\v\v^{\top}=\frac{1}{n_1\delta_1+n_2\delta_2} \left(
\begin{array}{cc}
\delta_1\J & -\sqrt{\delta_1\delta_2}\J \\
-\sqrt{\delta_1\delta_2}\J & \delta_2\J
\end{array}
 \right).
\end{eqnarray*}
As $H_1(x)=\frac 12 [H(x)-H(-x)]$ with $H(\theta_0)=n$ and $H(\theta_i)=0$ for any $i\neq 0$, we have that
$H_1(\theta_0)=-H_1(\theta_d)=n/2$ and $H_1(\theta_i)=0$ for $i\neq 0,d$.
Hence, using the properties and the above expressions of the idempotents,
\begin{eqnarray*}
H_1(\A) & = & \sum_{i=0}^d H_1(\theta_i)\E_i=H_1(\theta_0)\E_0+H_1(\theta_d)\E_d \nonumber\\
 & = & \frac{n}{2}(\E_0-\E_d)=
\frac{n\sqrt{\delta_1\delta_2}}{n_1\delta_1+n_2\delta_2}\left(
\begin{array}{cc}
\O & \J \\
\J & \O
\end{array}
 \right),
\end{eqnarray*}
and
the result follows since $n_1\delta_1=n_2\delta_2$.

Conversely, if (\ref{H1(A)}) holds,  and $\textstyle\A=\left(
\begin{array}{cc}
\O & \B \\
\B^\top & \O
\end{array}
 \right)$,
 the equality $\A H_1(\A)=H_1(\A)\A$ yields
$$
\left(
\begin{array}{cc}
\B\J & \O \\
\O & \B^{\top}\J
\end{array}\right)
 =
 \left(
\begin{array}{cc}
\J\B^{\top} & \O \\
\O & \J\B
\end{array}
 \right).
$$
Thus, $(\B\J)_{uv}=(\J\B^{\top})_{uv}$ implies that $\delta_u=\delta_v$ for any two vertices $u,v\in V_1$, whereas $(\B^{\top}\J)_{wz}=(\J\B)_{wz}$ means that $\delta_w=\delta_z$ for any two vertices $w,z\in V_2$. Thus, $\G$ is biregular and the proof is complete.
\end{proof}

Notice that the constant $\alpha$ is the ratio between the arithmetic and geometric means of the numbers $n_1,n_2$. Hence, (\ref{H1(A)}) holds with $\alpha=1$ if and only if $n_1=n_2$ or, equivalently, when $\G$ is regular.

In fact, the above result can be reformulated in the following way:

\begin{theo}
\label{Hof-bip(2)}
A bipartite graph $\G$, with $n=n_1+n_2$ vertices and distinct eigenvalues $\theta_0>\theta_1>\cdots>\theta_d$,
is connected and biregular if and only if there exists a
polynomial $P$ satisfying 
\begin{equation}\label{P(A)}
P(\A)=\left(
\begin{array}{cc}
\O & \J \\
\J & \O
\end{array}
 \right).
\end{equation}
Moreover, $P=\frac{2\sqrt{n_1 n_2}}{n_1+n_2}H_1$, where $H_1$ is the odd part of the Hoffmann  polynomial of $\G$, so that   $P(\theta_0)=-P(\theta_d)=\sqrt{n_1 n_2}$, and $P(\theta_i)=0$ for $i\neq 0,d$.
\end{theo}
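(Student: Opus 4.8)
The plan is to read the statement essentially off Theorem~\ref{Hof-bip(1)}, renormalizing $H_1$ so that the right-hand side of (\ref{P(A)}) carries no scalar; the only genuinely new point is to recover connectedness of $\G$ from the block shape of $P(\A)$. So suppose first that $\G$ is connected and biregular, with degrees $\delta_1,\delta_2$. Then Theorem~\ref{Hof-bip(1)} gives $H_1(\A)=\alpha\left(\begin{array}{cc}\O&\J\\ \J&\O\end{array}\right)$ with $\alpha=\frac{n_1+n_2}{2\sqrt{n_1n_2}}$, so I would set $P:=\frac1\alpha H_1=\frac{2\sqrt{n_1n_2}}{n_1+n_2}H_1$, a polynomial of degree at most $d$ satisfying (\ref{P(A)}). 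Its values on $\sp\G$ follow from the values of $H_1$ recalled in the proof of Theorem~\ref{Hof-bip(1)}, namely $H_1(\theta_0)=-H_1(\theta_d)=n/2$ and $H_1(\theta_i)=0$ for $i\neq 0,d$: dividing by $\alpha$ and using $n=n_1+n_2$ gives $P(\theta_0)=-P(\theta_d)=\sqrt{n_1n_2}$ and $P(\theta_i)=0$ otherwise.

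For the converse, assume a polynomial $P$ with $P(\A)=\M$, where $\M:=\left(\begin{array}{cc}\O&\J\\ \J&\O\end{array}\right)$. I would first deduce that $\G$ is connected: writing $P(x)=\sum_\ell c_\ell x^\ell$ we have $(\M)_{uv}=\sum_\ell c_\ell(\A^\ell)_{uv}$, and since $(\A^\ell)_{uv}$ counts the $\ell$-walks from $u$ to $v$ it vanishes whenever $u,v$ lie in different connected components; as every entry of $\M$ with $u\in V_1$ and $v\in V_2$ equals $1$, each vertex of $V_1$ lies in the component of each vertex of $V_2$, and transitivity of ``being in the same component'' forces $\G$ to be connected. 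Biregularity then follows exactly as in the converse part of Theorem~\ref{Hof-bip(1)}: from $\A\M=\A P(\A)=P(\A)\A=\M\A$ with $\A=\left(\begin{array}{cc}\O&\B\\ \B^\top&\O\end{array}\right)$ one gets $\B\J=\J\B^\top$ and $\B^\top\J=\J\B$, which say that $\delta_u=\delta_v$ for $u,v\in V_1$ and $\delta_w=\delta_z$ for $w,z\in V_2$; write $\delta_1,\delta_2$ for the resulting common degrees.

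It remains to pin down $P$. By connectedness $m_0=1$, and the symmetry of the spectrum of a bipartite graph gives $m_d=m_0=1$ with $\theta_d=-\theta_0=-\sqrt{\delta_1\delta_2}$, so $\E_0$ and $\E_d$ are the rank-one idempotents attached to $\u=(\sqrt{\delta_1}\j|\sqrt{\delta_2}\j)^\top$ and $\v=(\sqrt{\delta_1}\j|-\sqrt{\delta_2}\j)^\top$ computed in the proof of Theorem~\ref{Hof-bip(1)}. Subtracting those two matrices and using $n_1\delta_1=n_2\delta_2=m$ gives $\E_0-\E_d=\frac{2\sqrt{\delta_1\delta_2}}{n_1\delta_1+n_2\delta_2}\M=\frac{\sqrt{\delta_1\delta_2}}{m}\M$, and since $m=\sqrt{n_1n_2\delta_1\delta_2}$ this becomes $\M=\sqrt{n_1n_2}\,(\E_0-\E_d)$. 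Comparing with $P(\A)=\sum_{i=0}^d P(\theta_i)\E_i$ and using the linear independence of the idempotents yields $P(\theta_0)=-P(\theta_d)=\sqrt{n_1n_2}$ and $P(\theta_i)=0$ for $i\neq 0,d$; since $H_1$ takes the values $n/2,-n/2,0$ at these same eigenvalues, $P$ and $\frac{2\sqrt{n_1n_2}}{n_1+n_2}H_1$ agree on $\sp\G$ and hence coincide once $P$ is taken of degree at most $d$.

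I do not anticipate a real obstacle, since the result is essentially a repackaging of Theorem~\ref{Hof-bip(1)}. The one step to phrase carefully is the connectedness argument in the converse---the only ingredient not already present in Theorem~\ref{Hof-bip(1)}---which hinges on the strict positivity of the off-diagonal blocks of $\M$; and the ``moreover'' clause should be read as determining $P$ only modulo the minimal polynomial of $\A$, that is, among polynomials of degree at most $d$.
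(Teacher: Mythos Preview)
Your proof is correct and follows essentially the same route as the paper: necessity via Theorem~\ref{Hof-bip(1)}, connectedness from the nonvanishing off-diagonal block of $P(\A)$, and biregularity from $\A P(\A)=P(\A)\A$. The only cosmetic difference is in pinning down the values $P(\theta_i)$: the paper applies $P(\A)$ directly to the eigenvectors $\u$, $\v$, and a generic $\w$ orthogonal to both, whereas you expand $\M$ in the idempotent basis via the explicit formulas for $\E_0$ and $\E_d$ from the proof of Theorem~\ref{Hof-bip(1)}; both computations are equivalent and your caveat about uniqueness of $P$ only modulo the minimal polynomial is well taken.
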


\begin{proof}
We only need to prove sufficiency, since necessity has already been  proved in Theorem \ref{Hof-bip(1)}. If (\ref{P(A)}) holds, we have that for any two vertices $u,v$ in different partite sets, $(P(\A))_{uv}=1\neq 0$ implies that there is some $u$-$v$ path. Also, if $u,v$ are vertices in the same partite set, for any vertex $w$ in the other partite set there exist $u$-$w$ and $v$-$w$ paths, thus assuring again the existence of some $u$-$v$ path. Consequently, $\G$ is connected and, from the same reasoning as above, biregular with degrees, say, $\delta_1$, $\delta_2$. Now, let us prove that $P$ is as claimed. Since $\u=(\sqrt{\delta_{1}}\j|\sqrt{\delta_{2}}\j)^{\top}$ is an eigenvector of $\A$ for the eigenvalue $\theta_0=\sqrt{\delta_1\delta_2}$, we have
$$
P(\A)\u=P(\theta_0) \left(
\begin{array}{c}
\sqrt{\delta_{1}}\j^{\top} \\
\sqrt{\delta_{2}}\j^{\top}
\end{array}
 \right)=
 \left(
\begin{array}{cc}
\O & \J \\
\J & \O
\end{array}
 \right)
 \left(
\begin{array}{c}
\sqrt{\delta_{1}}\j^{\top} \\
\sqrt{\delta_{2}}\j^{\top}
\end{array}
 \right)=
 \left(
\begin{array}{c}
n_2\sqrt{\delta_{2}}\j^{\top} \\
n_1\sqrt{\delta_{1}}\j^{\top}
\end{array}
 \right),
$$
whence $P(\theta_0)=\frac{n_2\sqrt{\delta_{2}}}{\sqrt{\delta_{1}}}=\sqrt{n_1n_2}$. Similarly, by using that $\v=(\sqrt{\delta_{1}}\j|-\sqrt{\delta_{2}}\j)^{\top}$ is an eigenvector of $\A$ for the eigenvalue $\theta_d=-\sqrt{\delta_1\delta_2}$, we get $P(\theta_d)=-\sqrt{n_1n_2}$.
Finally, assume that $\w=(\w_1|\w_2)^{\top}$ is an eigenvector of $\A$ with  eigenvalue $\theta_i$, $i\neq 0,d$. Then, from the orthogonality conditions $\langle \w,\u\rangle=0$ and $\langle \w,\v\rangle=0$ (where $\u$ and $\v$ are as in the proof of Theorem \ref{Hof-bip(1)}) we have, respectively, that
$\sqrt{\delta_1}\langle \w_1,\j\rangle+\sqrt{\delta_2}\langle \w_2,\j\rangle=0$ and $\sqrt{\delta_1}\langle \w_1,\j\rangle-\sqrt{\delta_2}\langle \w_2,\j\rangle=0$. Therefore, $\langle \w_1,\j\rangle=\langle \w_2,\j\rangle=0$. Moreover,
$$
P(\A)\w=P(\theta_i)
\left(
\begin{array}{c}
\w_1^{\top} \\
\w_2^{\top}
\end{array}
 \right)=
 \left(
\begin{array}{cc}
\O & \J \\
\J & \O
\end{array}
 \right)
\left(
\begin{array}{c}
\w_1^{\top} \\
\w_2^{\top}
\end{array}
 \right)=
 \left(
\begin{array}{c}
\langle \w_2,\j\rangle\j^{\top} \\
\langle \w_1,\j\rangle\j^{\top}
\end{array}
 \right)=
 \left(
\begin{array}{c}
\vec0 \\
\vec0
\end{array}
 \right),
$$
and, since $\w\neq \vec0$, $P(\theta_i)=0$, as claimed.
\end{proof}

\section{Distance-regular graphs}
\label{drg}
In this section we give some new characterizations of bipartite distance-regular graphs. As commented in the introduction, a general phenomenon is that the conditions for a bipartite graph to be distance-regular can be relaxed in comparison with the analogues for general graphs. To prove and compare our results, we first recall some basic facts from the theory of distance-regular graphs and some of their known characterizations.

\subsection{Some characterizations of distance-regularity}
Let $\G$ be a graph with diameter $D$, adjacency matrix
$\A$, and $d+1$ distinct eigenvalues. Let $\A_i$, $i=0,1,\ldots $, be the {\em distance-$i$ matrix} of $\G$, with entries  $(\A_i)_{uv}=1$ if $\dist(u,v)=i$ and $(\A_i)_{uv}=0$ otherwise (if $i>D$, we have  $\A_i=\matrix0$). Then,
$$
{\cal A}=
\mathbb{R}_{d}[\A]=\span \{\textbf{\emph{I}},
\textbf{\emph{A}}, \textbf{\emph{A}}^2, \ldots,
\textbf{\emph{A}}^{d}\}
$$
is an algebra, with the ordinary product of matrices and orthogonal basis  $\{\E_0,\E_1,\ldots,
\E_d\}$ and $\{p_0(\A),p_1(\A),\ldots, p_d(\A)\}$, called
the {\it adjacency  algebra}; whereas
$$
{\cal D}= \span
\{\textbf{\emph{I}},\textbf{\emph{A}},\textbf{\emph{A}}_2,\ldots,\textbf{\emph{A}}_D\}
$$
forms an algebra with the entrywise or Hadamard product ``$\circ$" of matrices, defined
by
$(\textbf{\emph{X}}\circ\textbf{\emph{Y}})_{uv}=(\textbf{\emph{X}})_{uv}(\textbf{\emph{Y}})_{uv}$.
We call ${\mathcal D}$ the {\em distance $\circ$-algebra}.
Note that, when $\G$ is regular,  $\I,\A,\J\in {\cal A}\cap {\cal D}$ since
$\J=H(\A)=\sum_{i=0}^D \A_i$. Thus, $\textrm{dim}\, ({\cal
A}\cap {\cal D}) \geq 3$, if $\G$ is not a complete graph (in
this exceptional case, $\textbf{\emph{J}}=\textbf{\emph{I}}+\textbf{\emph{A}}$).
In this algebraic context, an important result is that $\G$ is distance-regular if and only if ${\mathcal A}={\mathcal D}$,
 which is therefore equivalent to $\textrm{dim}\, ({\cal A}\cap {\cal D}) =
d+1$ (and hence $d=D$); see, for instance, Biggs \cite{biggs} or Brouwer, Cohen and Neumaier \cite{bcn}. This leads to some characterizations of distance-regularity in terms of some constants $p_{ji}$, $q_{ij}$ and $a_i^{(\ell)}$ (see also Rowlinson \cite{r97}): A graph $\G$ is distance-regular if and only if any of the following conditions holds.
\begin{itemize}
 \item[$(a)$]
 $\A_i\E_j  =  p_{ji}\E_j$,\quad $i,j=0,1,\ldots, d(=D)$.
 \item[$(b)$]
 $\E_j\circ \A_i = q_{ij}\A_i$,\quad $i,j=0,1,\ldots, d$.
\item[$(c)$]
$\A^{\ell}\circ \A_i =a_i^{(\ell)}\A_i$,\quad $i,\ell=0,1,\ldots, d$.
\end{itemize}

When the conditions $(a)$ hold, the predistance polynomials become the {\em distance polynomials} satisfying
$p_i(\A)=\A_i$, $i=0,1,2,\ldots, d$
(see, for example, Bannai and Ito \cite{bi84}).
Moreover, for general graphs with not necessarily $D=d$,
the conditions $(a)$ are a characterization of the
so-called {\em distance-polynomial graphs}, introduced by  Weichsel
\cite{w82} (see also Beezer \cite{beezer} and Dalf\'o, van Dam, Fiol, Garriga and Gorissen \cite{ddfgg10}). This is equivalent
to ${\cal D}\subset {\cal A}$ (but not necessarily ${\cal D}=
{\cal A}$); that is, every distance matrix
$\textbf{\emph{A}}_i$ is a polynomial (not necessarily $p_i$) in $\textbf{\emph{A}}$.
In contrast with this,  the conditions $(b)$ and $(c)$ are
equivalent to ${\cal A}\subset {\cal D}$ and hence to ${\cal
A}={\cal D}$ (which implies $d=D$) as $\textrm{dim}\, {\cal
A}\ge D+1=\textrm{dim}\, {\cal D}$.

As summarized in the following theorem, the above conditions for  distance-regularity can be relaxed if
we assume some extra natural hypothesis (such as regularity) thus giving more `economical' characterizations.

\begin{theo}[\cite{fgy1b,f01,ddfgg10}]
\label{theo-charac-drg}
A regular graph $\G$ with $d+1$ distinct eigenvalues, diameter $D=d$, and idempotents $\E_1,\E_d$ is distance-regular if and only if any of the following conditions holds:
\begin{itemize}
\item[$(a)$]
$\A_d\in {\cal A}$,\quad or\quad $\A_d = p_d(\A)$,\quad or\quad $\A_i = p_i(\A)$ for $i=d-2,d-1$.
\item[$(b)$] $\E_j\in {\cal D}$  for  $j=1,d$.
\item[$(c)$] $\A^{\ell}\circ \A_i \in {\cal D}$ for $\ell=i,i+1$ and $i\le d-1$.
\end{itemize}
\end{theo}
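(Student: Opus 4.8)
The plan is to prove Theorem~\ref{theo-charac-drg} by reducing each of the three relaxed conditions to the classical characterization ${\cal A}={\cal D}$ (equivalently $\dim({\cal A}\cap{\cal D})=d+1$), exploiting throughout the hypotheses that $\G$ is regular with $D=d$. Necessity is immediate in all three cases: if $\G$ is distance-regular then ${\cal A}={\cal D}$, so $\A_d\in{\cal A}$ and in fact $\A_i=p_i(\A)$ for all $i$, while $\E_j\in{\cal A}={\cal D}$ for all $j$, and $\A^\ell\in{\cal A}={\cal D}$ gives $\A^\ell\circ\A_i\in{\cal D}$ since ${\cal D}$ is closed under $\circ$. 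So the work is entirely in the sufficiency direction, and I would treat the three parts separately, citing the known preliminary facts (the equivalence ${\cal A}={\cal D}\Leftrightarrow$ distance-regular, and the descriptions of conditions $(a),(b),(c)$ from the text) as available.

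For part $(a)$: the key point is that the distance matrices are built up recursively. Since $\G$ is regular, $\I,\A,\J\in{\cal A}\cap{\cal D}$, and $\A_2$ is a polynomial in $\A$ of degree $2$ because $\A^2$ has support exactly on vertices at distance $\le 2$ and the diagonal/distance-$1$ part is determined by regularity; more generally one shows inductively that if $\A_0,\dots,\A_{i}$ are polynomials in $\A$ then so is $\A_{i+1}$, up to the point where the recursion could fail. The subtlety is the top of the range, which is exactly why only $\A_d\in{\cal A}$ (or the two conditions at $i=d-2,d-1$) needs to be assumed: knowing $\A_d=p_d(\A)$ forces, via $\sum_{i}\A_i=\J=H(\A)$ and the three-term recurrence \eqref{recur-pol} for the predistance polynomials, that all the $\A_i$ coincide with $p_i(\A)$ and hence ${\cal D}\subseteq{\cal A}$; together with $\dim{\cal A}\ge\dim{\cal D}$ this gives ${\cal A}={\cal D}$. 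The alternative sub-conditions ($\A_d\in{\cal A}$ without specifying the polynomial, or $\A_i=p_i(\A)$ for $i=d-2,d-1$) are handled by the same recursion run from the top down, using that $\A_d\cdot\A$ reaches distance $d-1$ and isolates $\A_{d-1}$.

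For parts $(b)$ and $(c)$: here I would invoke the Interlacing/quotient machinery and the idempotent identities rather than the polynomial recursion. For $(b)$, $\E_d\in{\cal D}$ combined with the orthogonal-projection structure and the symmetry $\E_d=\frac1n\sum(-1)^{\dist}\dots$-type expansion forces the crossed local multiplicities $m_{uv}(\theta_d)$ to depend only on $\dist(u,v)$, and similarly $\E_1\in{\cal D}$ gives the same for $\theta_1$; one then propagates this to all eigenvalues using that the $\E_i$ are polynomials in $\A$ and $\A$ itself preserves ${\cal D}$-structure when combined with the distance partition, concluding $\A_i\E_j=p_{ji}\E_j$, i.e. condition $(a)$ of the pre-theorem list. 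For $(c)$, the hypothesis $\A^\ell\circ\A_i\in{\cal D}$ for $\ell=i,i+1$ gives that the number of $\ell$-walks from $u$ to $v$ depends only on $\dist(u,v)$ for those two critical values of $\ell$; induction on $i$ together with the recurrence expressing $\A_{i+1}$ in terms of $\A\A_i$, $\A_i$, $\A_{i-1}$ then shows every $\A_i$ lies in ${\cal A}$, giving ${\cal A}={\cal D}$.

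The main obstacle I anticipate is the bookkeeping at the ``boundary'' of the recursion in all three parts — showing precisely that the minimal-looking hypotheses ($\A_d$ alone, or $\E_1,\E_d$ alone, or two values of $\ell$) suffice to close the induction without circularity. The cleanest route is probably to reduce $(b)$ and $(c)$ to the already-cited equivalences $\mathrm{(b)}\Leftrightarrow{\cal A}\subseteq{\cal D}$ and $\mathrm{(c)}\Leftrightarrow{\cal A}\subseteq{\cal D}$, so that one only has to check that the reduced hypotheses imply the full conditions $(b)$, $(c)$ of that list; and to reduce $(a)$ to ${\cal D}\subseteq{\cal A}$ via the three-term recurrence and $\J=H(\A)$. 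Since these equivalences and the predistance-polynomial recurrence are all available from the text, the remaining arguments are short propagation/induction steps rather than new computations.
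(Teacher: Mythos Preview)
The paper does not give its own proof of this theorem: it is stated with the citation \cite{fgy1b,f01,ddfgg10} and used as a black box to derive Theorem~\ref{charac-drbg}. So there is no in-paper argument to compare your proposal against; the authors are simply quoting known results of Fiol, Garriga, Yebra and of Dalf\'o, van Dam, Fiol, Garriga, Gorissen.

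That said, your sketch has real gaps if taken as a standalone proof. In part~$(a)$ you assert that ``$\A_2$ is a polynomial in $\A$ of degree~$2$'' and more generally that ``if $\A_0,\dots,\A_i$ are polynomials in $\A$ then so is $\A_{i+1}$''; but for a merely regular graph this is false in general (the entries of $\A^2$ at distance~$1$ count common neighbours of adjacent vertices, which need not be constant), and the inductive step you describe is essentially the conclusion you are trying to reach. The actual proofs in \cite{fgy1b,f01} go in the opposite direction: from $\A_d=p_d(\A)$ one works \emph{downward} via the three-term recurrence~\eqref{recur-pol} and the identity $H(\A)=\J$, and the argument requires the nontrivial observation (related to the spectral excess theorem) that $p_d(\A)$ having the correct support forces $p_{d-1}(\A)$ to as well. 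In parts~$(b)$ and~$(c)$ your ``propagation'' steps are where all the content lies: getting from $\E_1,\E_d\in{\cal D}$ to all $\E_j\in{\cal D}$ is precisely the hard part of \cite{f01}, and is not a routine induction. If you intend to supply a proof here rather than cite the references, those are the steps that need genuine arguments.
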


\subsection{The case of bipartite graphs}
In the following result we prove that for bipartite graphs the above
conditions can be further relaxed giving new characterizations for bipartite
distance-regular graphs.

\begin{theo}
\label{charac-drbg}
A regular bipartite graph $\G$ with diameter $D=d$, idempotent $\E_1$, and predistance polynomials $p_0,\ldots, p_d$ is dis\-tance-regular if and only if any of the following conditions holds:
\begin{itemize}
\item[$(a1)$]
$\A_{i}=p_{i}(\A)$ for $i=d-3,d-2$, $d\geq3$.
\item[$(a2)$]
$\A_{i}=p_{i}(\A)$ for $i=d-4,d-2$, $d\geq4$.
\item[$(b)$]
$\E_1\in {\cal D}$.
\item[$(c)$]
$a_{uv}^{(\ell)}=a_i^{(\ell)}$  for  $\ell=i\le D-2$.
\end{itemize}
\end{theo}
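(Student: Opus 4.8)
The plan is to derive each of the three conditions from Theorem~\ref{theo-charac-drg} by exploiting the bipartite symmetry, namely that $\theta_i=-\theta_{d-i}$, that $p_i$ is even or odd according to the parity of $i$, and that consequently $\E_d=L_d(\A)$ where $L_d(x)=L_1(-x)$. The key structural fact I would use repeatedly is that for a bipartite graph, knowing $\E_1$ essentially gives $\E_d$ for free via the sign-change symmetry $(\u|\v)\mapsto(\u|-\v)$ of eigenvectors; in block-matrix terms, if $\E_1=\left(\begin{smallmatrix}\SS & \T\\ \T^\top & \U\end{smallmatrix}\right)$ then $\E_d=\left(\begin{smallmatrix}\SS & -\T\\ -\T^\top & \U\end{smallmatrix}\right)$. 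Since $\G$ is regular and bipartite, $\I,\A,\J\in{\cal D}$ automatically (as $\J=H(\A)=\sum_i\A_i$), and the distance matrices $\A_i$ with $i$ even lie in the ``even block-diagonal'' part of $\cal D$ while those with $i$ odd lie in the ``odd off-diagonal'' part, mirroring the decomposition \eqref{p(A)}.

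For part $(b)$, I would argue that $\E_1\in{\cal D}$ already forces $\E_d\in{\cal D}$: taking the even/odd block decomposition of $\E_1$ and negating the off-diagonal block (which preserves membership in $\cal D$, since $\cal D$ is spanned by the $\A_i$ and each $\A_i$ sits entirely in one block), we get $\E_d\in{\cal D}$. Then Theorem~\ref{theo-charac-drg}$(b)$ applies verbatim and $\G$ is distance-regular. For part $(a)$, note that by the bipartite symmetry of the predistance polynomials, $p_{d-2}$ is even or odd according to the parity of $d$, and one should be able to show that $\A_{d-2}=p_{d-2}(\A)$ forces also $\A_{d-1}=p_{d-1}(\A)$: the recurrence \eqref{recur-pol} with $\alpha_i=0$ relates $p_{d-1},p_{d-2},p_d$ through $xp_{d-1}=\beta_{d-2}p_{d-2}+\gamma_d p_d$, and combined with $\sum p_i=H$ and $H(\A)=\J=\sum\A_i$ (plus the fact that $\A\A_{d-2}$ has nonzero entries only at distances $d-3,d-2,d-1$), one recovers the second equation $\A_{d-1}=p_{d-1}(\A)$ needed to invoke Theorem~\ref{theo-charac-drg}$(a)$. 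For part $(c)$, I would show that the single family of equalities $a_{uv}^{(\ell)}=a_i^{(\ell)}$ for $\ell=i\le D-2$ (i.e.\ $\A^i\circ\A_i\in{\cal D}$ for $i\le d-2$) implies the pair of conditions $\ell=i,i+1\le d-1$ required in Theorem~\ref{theo-charac-drg}$(c)$; the missing case $\ell=i+1$ should follow because $\A^{i+1}\circ\A_i=(\A\cdot\A^i)\circ\A_i$ and the nonzero-entry support of $\A^{i+1}$ restricted to distance-$i$ pairs is controlled, by bipartiteness, once $\A^i\circ\A_i$ and $\A^{i}\circ\A_{i\pm1}$ are understood together with the already-available relations at smaller indices.

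The main obstacle I expect is part $(c)$: making precise how the bipartite parity constraint lets one bootstrap from the ``diagonal'' walk-counts $a_{uv}^{(i)}$ (with $\ell=i$) to the ``off-by-one'' walk-counts $a_{uv}^{(i+1)}$ without assuming them. The clean way is probably an induction on $i$: having established $\A_j=p_j(\A)$ for all $j\le i$ (equivalently $\A^\ell\circ\A_j=a_j^{(\ell)}\A_j$ for the relevant range), one uses $\A^{i+1}=\A\cdot\A^i$ together with $\A\A_i=\A_{i-1}$-part${}+{}$$\A_{i+1}$-part (valid in a bipartite graph since there is no $\A_i$-part, as adjacency changes distance parity) to write $\A^{i+1}\circ\A_{i+1}$ in terms of quantities already in $\cal D$, and likewise close the loop at step $i+1$. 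Throughout, the consistent theme — and what I would emphasize — is that the absence of odd closed walks removes exactly the ``middle'' term $\alpha_i p_i$ in the recurrence and the ``middle'' distance matrix $\A_i$ in each product $\A\A_i$, which is precisely what halves the number of conditions one must impose compared with the general case in Theorem~\ref{theo-charac-drg}.
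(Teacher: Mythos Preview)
Your plan for $(b)$ contains an index slip that undermines the argument: the bipartite sign-flip $(\u|\v)\mapsto(\u|-\v)$ sends the $\theta_i$-eigenspace to the $\theta_{d-i}$-eigenspace, so it relates $\E_1$ with $\E_{d-1}$, not with $\E_d$. In particular $L_d(x)=L_0(-x)$, not $L_1(-x)$. What actually gives $\E_d\in{\cal D}$ has nothing to do with $\E_1$: since $\G$ is regular bipartite, equation~(\ref{Ed-bip}) yields directly
\[
\E_d=\frac{1}{n}\Bigl(\sum_{i\ \mathrm{even}}\A_i-\sum_{i\ \mathrm{odd}}\A_i\Bigr)\in{\cal D},
\]
so $\E_d\in{\cal D}$ is automatic and Theorem~\ref{theo-charac-drg}$(b)$ applies at once. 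This is exactly the paper's route.

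For $(a)$ you aim at $\A_{d-1}=p_{d-1}(\A)$, but the natural (and the paper's) target is $\A_d=p_d(\A)$, which is an equally valid hypothesis in Theorem~\ref{theo-charac-drg}$(a)$ and much easier to reach. Your sketched reduction via $xp_{d-2}=\beta_{d-3}p_{d-3}+\gamma_{d-1}p_{d-1}$ and $\A\A_{d-2}$ does not close: restricting $\A\A_{d-2}=\beta_{d-3}p_{d-3}(\A)+\gamma_{d-1}p_{d-1}(\A)$ to distance-$(d-1)$ entries gives $(p_{d-1}(\A))_{uv}=\gamma_{d-1}^{-1}\,|\{w\sim u:\dist(w,v)=d-2\}|$, and there is no reason for this count to be constant without already assuming distance-regularity. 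By contrast, to get $p_d(\A)=\A_d$ one checks entries $(u,v)$ with $\dist(u,v)=i$ according to the parity of $i$ versus $d$: different parity is trivial; $i=d$ uses $H(\A)=\J$; $i=d-2$ uses the hypothesis together with (\ref{H0(A)-H1(A)}); and $i<d-2$ uses the recurrence $xp_{d-1}=\beta_{d-2}p_{d-2}+\gamma_d p_d$.

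For $(c)$ you are working much too hard and you also miss a case. The ``off-by-one'' conditions $\A^{i+1}\circ\A_i\in{\cal D}$ are \emph{free} in a bipartite graph: a walk of length $i+1$ cannot join vertices at distance $i$, so $a_{uv}^{(i+1)}=0$ and $\A^{i+1}\circ\A_i=\O$. No induction or bootstrap is needed. What you do not address is that Theorem~\ref{theo-charac-drg}$(c)$ also requires the diagonal case $\ell=i=d-1$, which is \emph{not} in your hypothesis $(\ell=i\le D-2)$; the paper fills this in by showing $a_{uv}^{(d-1)}$ is a constant for $\dist(u,v)=d-1$, using regularity, $D=d$, and the constancy of $a_{uv}^{(d)}=\pi_0/n$ at distance $d$. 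So $(c)$ is in fact the easiest of the three parts, not the hardest.
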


\begin{proof}
We only prove sufficiency because necessity is straightforward.

$(a1)$ From Theorem \ref{theo-charac-drg}$(a)$, it suffices to prove that $p_{d-1}(\A)=\A_{d-1}$. Let $u,v$ be two vertices at distance $\dist(u,v)=i$.
First notice that, if $i$ and $d-1$ have distinct parity, $(p_{d-1}(\A))_{uv}=0$. In particular, this is the case when $i=d$.
Thus, if $i=d-1$, we have $(p_{d-1}(\A))_{uv}=(H(\A))_{uv}=1$.
Moreover, if $i=d-3$ we have $(p_{d-3}(\A))_{uv}+(p_{d-1}(\A))_{uv}=1$, where, from the hypothesis,  $(p_{d-3}(\A))_{uv}=1$. Hence, $(p_{d-1}(\A))_{uv}=0$.
Thus, the only case left is when $i\le d-5$ and $i$ has the same parity as $d-1$.
In this case the two-term recurrence $xp_j=\beta_{j-1}p_{j-1}+\gamma_{j+1}p_{j+1}$, for $j=0,\ldots,d$, satisfied by the predistance polynomials (of a bipartite graph) yields for $j=d-2$
$$
(\A p_{d-2}(\A))_{uv}=\beta_{d-3}(\A_{d-3})_{uv}+\gamma_{d-1} (p_{d-1}(\A))_{uv}=\gamma_{d-1} (p_{d-1}(\A))_{uv},
$$
with first term, using again the hypothesis,
$$
(\A p_{d-2}(\A))_{uv}=\sum_{w\in V}a_{uw}(\A_{d-2})_{wv}=\sum_{w\in \G(u)}(\A_{d-2})_{wv}=0,
$$
since $\dist(v,w)\le d-4$. Consequently, as $\gamma_{d-1}\neq 0$, $(p_{d-1}(\A))_{uv}=0$ and $p_{d-1}(\A)=\A_{d-1}$, as claimed.

$(a2)$ Now, from Theorem \ref{theo-charac-drg}$(a)$, it suffices to prove that $p_{d}(\A)=\A_{d}$. Then the proof is similar to the previous one. Indeed, with the same notation as before, if $i=d$, $(p_d(\A))_{uv}=(H(\A))_{uv}=1$, and
if $i$ and $d$ have distinct parity, $(p_{d}(\A))_{uv}=0$. The cases $i=d-2,d-4$ are proved again by using that $p_0+\cdots+p_d=H$. Then, the only case left is when $i\le d-6$ and $i$ and $d$ have the same parity. Now, by applying two times the above two-term recurrence of the predistance polynomials, we have
$x^2p_{d-2}=B_{d-4}p_{d-4}+A_{d-2}p_{d-2}+C_{d}p_d$, for some constants $B_{d-4}$, $A_{d-2}$ and $C_{d}$. Hence, from the hypothesis,
$$
(\A^2 \A_{d-2}){uv}=B_{d-4}(\A_{d-4})_{uv}+A_{d-2}((\A)_{d-2})_{uv}+C_{d} (p_{d}(\A))_{uv}=C_{d} (p_{d}(\A))_{uv},
$$
where
$$
(\A^2 \A_{d-2}){uv}=\sum_{w\in V}(\A^2)_{uw}(\A_{d-2})_{wv}=\sum_{\dist(w,u)\le 2}(\A_{d-2})_{wv}=0,
$$
since $\dist(w,v)\le d-4$. Then, as $C_d=\gamma_{d-1}\gamma_d\neq 0$,
$(p_{d}(\A))_{uv}=0$ and $p_{d}(\A)=\A_{d}$, as required.

Case $(b)$ is a consequence of Theorem \ref{theo-charac-drg}$(b)$ since, under the hypotheses, (\ref{Ed-bip}) yields
$$
\textstyle
\E_d=\frac{1}{n}\left(\sum_{\stackrel{\mbox{\scriptsize $i=0$}}{i\ {\rm even}}}^{d}\A_i-\sum_{\stackrel{\mbox{\scriptsize $i=0$}}{i\ {\rm odd}}}^{d}\A_i\right) \in {\cal D}.
$$
Finally, $(c)$ follows from
the fact that, since $\G$ is bipartite, there are no walks of length $\ell=i+1$ between vertices at distance $i$ and, thus, $a_i^{(i+1)}=0$. Moreover, as $\G$ is $\delta$-regular and $D=d$, we have $a_{d-1}^{(d-1)} = \frac{1}{\delta}a_{d}^{(d)}=\frac{\pi_0}{n\delta}$
because the number of $d$-walks between any two vertices $u,v$ at distance $d$, is a constant:
$
a_{uv}^{(d)}  =  (\A^d)_{uv}=\frac{\pi_0}{n}[H(\A)]_{uv}=\frac{\pi_0}{n}=a_{d}^{(d)}.
$
\end{proof}

In \cite{vd95} Van Dam proved that a connected regular graph with four distinct eigenvalues is {\em walk-regular} (that is, for every $\ell\ge 0$, the number of closed $\ell$-walks rooted at a given vertex is a constant through the graph, see Godsil and McKay \cite{gmk80}). Also, Dalf\'o, van Dam, Fiol, Garriga and Gorissen \cite{ddfgg10} observed that the same is true for a every connected regular bipartite graph with five distinct eigenvalues. (For properties about bipartite biregular graphs with five eigenvalues, see Van Dam and Spence \cite{vds05}.) As a consequence of the above theorem, we also have the following:

\begin{coro}
\begin{itemize}
\item[$(a)$]
Every bipartite $\delta$-regular graph $\G$ with $d+1=4$ distinct eigenvalues and diameter $D=3$ is distance-regular.
\item[$(b)$]
Every bipartite $\delta$-regular graph $\G$ with $d+1=5$ distinct eigenvalues, diameter $D=4$, and with every pair of nonadjacent vertices having $c$ common neighbors is distance-regular.
\end{itemize}
\end{coro}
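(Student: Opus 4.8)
The plan is to obtain both assertions directly from Theorem~\ref{charac-drbg}, via characterization~$(c)$: for a $\delta$-regular bipartite graph $\G$ with diameter $D=d$, it suffices to check that $a_{uv}^{(i)}$ depends only on $i=\dist(u,v)$ for each $i\le D-2$. For part~$(a)$ we have $d+1=4$, so $d=3$, and since $D=3$ the hypothesis $D=d$ of Theorem~\ref{charac-drbg} holds; characterization~$(c)$ then only requires $a_{uv}^{(i)}=a_i^{(i)}$ for $i\le D-2=1$. These are automatic: if $\dist(u,v)=0$ then $a_{uv}^{(0)}=(\I)_{uv}=1$, and if $\dist(u,v)=1$ then $a_{uv}^{(1)}=(\A)_{uv}=1$, in either case independently of the chosen pair. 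Hence $\G$ is distance-regular. (Equivalently one could invoke characterization~$(a)$, which for $d=3$ is the tautology $\A_{d-2}=\A_1=\A=p_1(\A)$.)

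For part~$(b)$ we have $d+1=5$, so $d=4$, and $D=4$ again gives $D=d$. Characterization~$(c)$ now requires $a_{uv}^{(i)}=a_i^{(i)}$ for $i\le D-2=2$; the cases $i=0,1$ are exactly as in part~$(a)$. For $i=2$, if $\dist(u,v)=2$ then $a_{uv}^{(2)}=(\A^2)_{uv}$ is precisely the number of common neighbours of $u$ and $v$, which by hypothesis equals the constant $c$. (The hypothesis is really a statement about distance-$2$ pairs only: two vertices at distance $\ge 3$ in a bipartite graph share no neighbour, and adjacent vertices share none either since there are no triangles, so $(\A^2)_{uv}$ vanishes in those cases.) Thus all the required equalities hold, and Theorem~\ref{charac-drbg}$(c)$ yields that $\G$ is distance-regular.

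The argument has no genuine obstacle: the content lies entirely in Theorem~\ref{charac-drbg}, and the only point to observe is that the conditions of characterization~$(c)$ are vacuously satisfied for the small indices $i\le 1$, so that in part~$(b)$ the single extra hypothesis on common neighbours supplies exactly the one remaining case $i=2$. One could alternatively route part~$(b)$ through characterization~$(a)$: from $\A^2=\delta\I+c\A_2$ one gets $\A_2\in{\cal A}$, and comparing with the three-term recurrence~(\ref{recur-pol})---with $\beta_0=\frac1n\tr\A^2=\delta$ and each vertex having $\delta(\delta-1)/c$ vertices at distance~$2$---pins down the leading coefficient of $p_2$ and gives $\A_2=p_2(\A)=p_{d-2}(\A)$; but characterization~$(c)$ is the shorter path.
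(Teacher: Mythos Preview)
Your proof is correct. The paper's own proof invokes Theorem~\ref{charac-drbg}$(a)$ rather than $(c)$: for part~$(a)$ it notes that $p_{d-2}=p_1=x$ so $p_1(\A)=\A=\A_1$ trivially, and for part~$(b)$ it observes that under the hypothesis $p_{d-2}=p_2=\frac{1}{c}(x^2-\delta)$ is the distance-$2$ polynomial. You instead route through characterization~$(c)$, which has the minor advantage in part~$(b)$ of not needing to identify the normalizing constant $\gamma_2$ of $p_2$ with $c$; your final paragraph in fact sketches exactly the paper's argument as an alternative. The two routes are essentially equivalent, differing only in which item of Theorem~\ref{charac-drbg} is invoked.
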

\begin{proof}
Just notice that, in case $(a)$, $p_{d-2}=p_1=x$ is the distance-$1$ polynomial, whereas in $(b)$, under the hypothesis, $p_{d-2}=p_2=\frac{1}{c}(x^2-\delta)$ is the distance-$2$ polynomial. Hence, in both cases,
Theorem \ref{charac-drbg}$(a)$ applies.
\end{proof}

\subsection{The spectral excess theorem for bipartite graphs}
Let $\G$ be a graph with $d+1$ eigenvalues and distance-$d$ matrix $\A_d$. Then the {\em excess} of a vertex $u$ is the number of vertices at distance $d$ from $u$, that is $\exc(u)=|\G_d(u)|$, and the {\em average excess} is  $\overline{\delta}_d=\frac{1}{n}\sum_{u\in V}\exc(u)$. Moreover, the {\em spectral excess} is the value at $\lambda_0$ of the highest degree predistance polynomial and it can be computed by using $\sp \G$ through the formula
$$
p_d(\lambda_0)=n\left(\sum_{i=0}^d\frac{\pi_0^2}{\pi_i^2} \right)^{-1},
$$
where $\pi_i=\prod_{j\neq i}|\lambda_i-\lambda_j|$ for $i=0,1,\ldots,d$.
By using these parameters, the spectral excess theorem, due to Fiol and Garriga \cite{fg97}, provides a quasi-spectral characterization of distance-regularity for regular connected graphs and reads as follows (see Van Dam \cite{vd08} or Fiol, Gago and Garriga \cite{fgg10} for short proofs):

\begin{theo}[\cite{fg97,vd08,fgg10}]
A connected regular graph with $d+1$ distinct eigenvalues is distance-regular if and only if its average excess equals its spectral excess:
\begin{equation}\label{SPET}
\overline{\delta}_d=p_d(\lambda_0).
\end{equation}
\end{theo}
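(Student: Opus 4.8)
The plan is to prove the theorem by a short inner-product argument inside the adjacency algebra ${\cal A}=\mathbb{R}_{d}[\A]$, comparing the distance-$d$ matrix $\A_d$ with the matrix $p_d(\A)$. First I would extend the scalar product $\langle\cdot,\cdot\rangle_\G$ from $\R_d[x]$ to the space of symmetric matrices via $\langle\matrixX,\matrixY\rangle_\G=\frac1n\tr(\matrixX\matrixY)$, which agrees with the original definition when $\matrixX=f(\A)$ and $\matrixY=g(\A)$. I would also keep at hand Hoffman's identity: since $\G$ is connected and regular, $\J=H(\A)=\sum_{i=0}^d p_i(\A)$.

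Next I would record three elementary facts. (i) Since $\A_d$ is the $0/1$ matrix with $(\A_d)_{uv}=1$ exactly when $\dist(u,v)=d$, we get $\|\A_d\|_\G^2=\frac1n\tr(\A_d^2)=\frac1n\sum_{u\in V}|\G_d(u)|=\overline{\delta}_d$. (ii) By the normalization $\|p_i\|_\G^2=p_i(\lambda_0)$ of the predistance polynomials, $\|p_d(\A)\|_\G^2=p_d(\lambda_0)>0$. (iii) For every pair $u,v$ with $\dist(u,v)=d$ one has $(p_d(\A))_{uv}=1$: evaluating the identity $\sum_{i=0}^d p_i(\A)=\J$ at the $uv$-entry, every term $p_i(\A)$ with $i<d$ contributes $0$, because a polynomial in $\A$ of degree $i<d$ counts weighted walks of length at most $i$ and there is no $u$-$v$ walk of length less than $d$; hence the remaining term $(p_d(\A))_{uv}$ equals $1$. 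Combining (iii) with the shape of $\A_d$ gives $\langle\A_d,p_d(\A)\rangle_\G=\frac1n\sum_{\dist(u,v)=d}(p_d(\A))_{uv}=\overline{\delta}_d$ as well.

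From here the conclusion is routine. By Cauchy--Schwarz, $\overline{\delta}_d^2=\langle\A_d,p_d(\A)\rangle_\G^2\le\|\A_d\|_\G^2\,\|p_d(\A)\|_\G^2=\overline{\delta}_d\,p_d(\lambda_0)$, hence $\overline{\delta}_d\le p_d(\lambda_0)$, which is one half of the theorem. If equality holds, then
$$
\|\A_d-p_d(\A)\|_\G^2=\|\A_d\|_\G^2-2\langle\A_d,p_d(\A)\rangle_\G+\|p_d(\A)\|_\G^2=\overline{\delta}_d-2\overline{\delta}_d+p_d(\lambda_0)=0,
$$
so $\A_d=p_d(\A)\in{\cal A}$; moreover $\A_d\ne\matrix0$ since $\overline{\delta}_d=p_d(\lambda_0)>0$, so the diameter satisfies $D=d$, and Theorem \ref{theo-charac-drg}$(a)$ gives that $\G$ is distance-regular. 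Conversely, if $\G$ is distance-regular the predistance polynomials are the distance polynomials, so $p_d(\A)=\A_d$, and therefore $p_d(\lambda_0)=\|p_d(\A)\|_\G^2=\|\A_d\|_\G^2=\overline{\delta}_d$.

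The step I expect to require the most care is identity (iii): the assertion that the \emph{top} predistance polynomial already takes the value $1$ on all pairs at extremal distance. This is precisely the place where the combinatorial decomposition $\J=\sum_i\A_i$ is matched with the algebraic one $\J=\sum_i p_i(\A)$, using that the lower-degree terms vanish at distance-$d$ pairs. The only other point to watch is the degenerate case $D<d$, which is harmless: then $\A_d=\matrix0$, so $\overline{\delta}_d=0<p_d(\lambda_0)$, the inequality is strict, and $\G$ (having $D\ne d$) is not distance-regular, in agreement with the claimed equivalence.
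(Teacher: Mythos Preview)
Your argument is correct and is essentially the short proof of the spectral excess theorem given in the references \cite{vd08,fgg10} that the paper cites; the paper itself does not reprove the result but only remarks that ``the proof of this theorem is based on the characterization of Theorem~\ref{theo-charac-drg}$(a)$'', which is exactly the key step you invoke after deducing $\A_d=p_d(\A)$ from the equality case of Cauchy--Schwarz. Your handling of the auxiliary points---fact (iii) via Hoffman's identity $\J=\sum_i p_i(\A)$, and the degenerate case $D<d$---is also in line with those references, so there is nothing to add.
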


The proof of this theorem is based on the characterization of Theorem \ref{theo-charac-drg}$(a)$ and, hence, it is natural to devise an analogous result for bipartite graphs, which is given in terms of the following parameters.
Given a graph $\G$, let $\omega_i$ be the leading coefficient of its predistance polynomial $p_i$, and consider the average numbers  of vertices at distance $i$ and shortest $i$-paths; that is,
$$
\overline{\delta}_{i}=\frac{1}{n}\sum_{u\in V}|\G_{i}(u)|\quad \mbox{and}\quad \overline{a}_{i}^{(i)}=\frac{1}{n\overline{\delta}_{i}}\sum_{\dist(u,v)=i}a_{uv}^{(i)},
$$
respectively. Then, we have the following result:

\begin{theo}
A regular bipartite graph $\G$ with $d+1$ distinct eigenvalues and diameter $D=d$ is distance-regular if and only if
\begin{equation}\label{SPET-bip}
\overline{a}_{i}^{(i)}=1/\omega_{i}\quad
\mbox{and}\quad\overline{\delta}_{i}=p_{i}(\lambda_0).
\end{equation}
for either $i=d-3,d-2$ or $i=d-4,d-2$. 
\end{theo}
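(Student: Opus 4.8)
The plan is to reduce the statement, via Theorem~\ref{charac-drbg}$(a)$, to the single matrix identity $p_{d-2}(\A)=\A_{d-2}$: since $\G$ is regular, bipartite and of diameter $D=d$, that identity is equivalent to distance-regularity, so it suffices to prove that the two displayed equalities together are equivalent to $p_{d-2}(\A)=\A_{d-2}$. Throughout I would work with the scalar product $\langle X,Y\rangle=\frac1n\tr(XY^{\top})$ on symmetric $n\times n$ matrices, which restricts on the adjacency algebra to $\langle\cdot,\cdot\rangle_\G$.

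The first ingredient is an elementary entrywise remark: if $\dist(u,v)=d-2$, then $(\A^{j})_{uv}=0$ for every $j<d-2$ (no walk is shorter than the distance), so, since $p_{d-2}(x)$ is $\omega_{d-2}x^{d-2}$ plus a polynomial of degree at most $d-3$, one gets $(p_{d-2}(\A))_{uv}=\omega_{d-2}(\A^{d-2})_{uv}=\omega_{d-2}\,a_{uv}^{(d-2)}$. With this, three quantities can be computed directly. First, since $\A_{d-2}$ is a $0/1$ matrix, $\langle\A_{d-2},\A_{d-2}\rangle=\frac1n\sum_{u\in V}|\G_{d-2}(u)|=\overline{\delta}_{d-2}$. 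Second, by the normalization of the predistance polynomials, $\langle p_{d-2}(\A),p_{d-2}(\A)\rangle=\|p_{d-2}\|_\G^{2}=p_{d-2}(\lambda_0)$. Third, using the entrywise remark together with the definition of $\overline{a}_{d-2}^{(d-2)}$ (and the fact that the number of ordered pairs at distance $d-2$ equals $n\overline{\delta}_{d-2}$, which is positive since $D=d$ forces such pairs to exist), $\langle p_{d-2}(\A),\A_{d-2}\rangle=\frac1n\sum_{\dist(u,v)=d-2}(p_{d-2}(\A))_{uv}=\omega_{d-2}\,\overline{\delta}_{d-2}\,\overline{a}_{d-2}^{(d-2)}$.

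For the ``if'' direction, assume $\overline{a}_{d-2}^{(d-2)}=1/\omega_{d-2}$ and $\overline{\delta}_{d-2}=p_{d-2}(\lambda_0)$. Then the third inner product above equals $\overline{\delta}_{d-2}$, so all three of $\langle\A_{d-2},\A_{d-2}\rangle$, $\langle p_{d-2}(\A),p_{d-2}(\A)\rangle$ and $\langle p_{d-2}(\A),\A_{d-2}\rangle$ are equal to $\overline{\delta}_{d-2}$; whence $\|p_{d-2}(\A)-\A_{d-2}\|^{2}=\overline{\delta}_{d-2}-2\overline{\delta}_{d-2}+\overline{\delta}_{d-2}=0$, and thus $p_{d-2}(\A)=\A_{d-2}$, so $\G$ is distance-regular by Theorem~\ref{charac-drbg}$(a)$. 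For the ``only if'' direction, if $\G$ is distance-regular then the predistance polynomials are the distance polynomials, so $p_{d-2}(\A)=\A_{d-2}$; comparing the $(u,v)$-entry for $\dist(u,v)=d-2$ gives $\omega_{d-2}\,a_{uv}^{(d-2)}=1$ for every such pair, whence $\overline{a}_{d-2}^{(d-2)}=1/\omega_{d-2}$, and taking norms gives $p_{d-2}(\lambda_0)=\|\A_{d-2}\|^{2}=\overline{\delta}_{d-2}$.

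I do not expect a serious obstacle. The only points requiring care are the entrywise identity $(p_{d-2}(\A))_{uv}=\omega_{d-2}\,a_{uv}^{(d-2)}$ on pairs at distance $d-2$ (together with $\omega_{d-2}\neq0$, which is immediate since $\deg p_{d-2}=d-2$) and checking that $\overline{\delta}_{d-2}>0$ so that the average $\overline{a}_{d-2}^{(d-2)}$ is well defined, the cases $d\le 2$ being trivial. Conceptually, the extra combinatorial hypothesis $\overline{a}_{d-2}^{(d-2)}=1/\omega_{d-2}$ is exactly what forces the cross term $\langle p_{d-2}(\A),\A_{d-2}\rangle$ to equal $\overline{\delta}_{d-2}$, so that --- unlike in the proof of the classical spectral excess theorem, where a Cauchy--Schwarz inequality intervenes --- the argument collapses to a single ``zero squared norm'' computation that directly yields $p_{d-2}(\A)=\A_{d-2}$.
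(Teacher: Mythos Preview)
Your argument is correct. The reduction to showing $p_{d-2}(\A)=\A_{d-2}$ via Theorem~\ref{charac-drbg}$(a)$ is exactly the paper's strategy, and your three inner-product computations, together with the expansion of $\|p_{d-2}(\A)-\A_{d-2}\|^{2}$, are clean and valid; the entrywise identity $(p_{d-2}(\A))_{uv}=\omega_{d-2}\,a_{uv}^{(d-2)}$ on pairs at distance $d-2$ and the well-definedness of $\overline{a}_{d-2}^{(d-2)}$ are handled correctly.

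The only difference from the paper is one of presentation rather than substance: for the sufficiency direction the paper simply invokes \cite[Prop.~2.4$(a3)$]{ddfg12}, which asserts (in general index~$i$) that the two numerical conditions force $p_i(\A)=\A_i$, and then applies Theorem~\ref{charac-drbg}$(a)$. Your ``zero squared norm'' computation is precisely the content of that cited proposition, so you have effectively unpacked the external reference into a short self-contained argument. What you gain is that the proof no longer depends on \cite{ddfg12}; what the paper gains is brevity. Neither approach uses anything beyond the normalization $\|p_i\|_{\G}^{2}=p_i(\lambda_0)$ and the elementary fact that lower powers of $\A$ vanish on pairs at the given distance.
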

\begin{proof}
The necessity of both conditions follows easily by considering that, if $\G$
is distance-regular, then $p_{i}(\A)=\A_{i}$, for every $i=0,\ldots,d$; whereas
the sufficiency follows from \cite[Prop. 2.4$(a3)$]{ddfg12}, assuring that $p_{i}(\A)=\A_{i}$ for either $i=d-3,d-2$ or $i=d-4,d-2$, and Theorem \ref{charac-drbg}$(a)$.
\end{proof}

In fact, as shown in \cite{ddfg12}, the two conditions in (\ref{SPET-bip}) can be replaced by only one, namely,
$$
\overline{\delta}_{i}=\frac{p_{i}(\lambda_0)}{\omega_{i}^2[\overline{a}_{i}^{(i)}]^2},
$$
for either $i=d-3,d-2$ or $i=d-4,d-2$.
to be compared with (\ref{SPET}).
\vskip 1cm

\noindent {\bf Acknowledgments.}
Research supported by the Ministerio de Ciencia e
Innovaci\'on (Spain) and the European Regional Development Fund under
project MTM2011-28800-C02-01, and by the Catalan Research Council
under project 2009SGR1387.

\end{document}